\title{Graphs with given automorphism group and large clique number}
\author{John Haslegrave}
\newcommand{\aut}{\operatorname{Aut}}
\newtheorem{theorem}{Theorem}
\newtheorem{proposition}[theorem]{Proposition}
\newtheorem{corollary}[theorem]{Corollary}
\begin{document}
	\maketitle
	
	\begin{abstract}Barbieri recently showed that the finite graphs realising any given finite automorphism group have unbounded genus, answering a question of Cornwell et al. In this note we give a short proof of a stronger result: they have unbounded clique number.
	\end{abstract}

In a paper concerned primarily with the minimum genus of graphs realising a given automorphism group, Cornwell, Doring, Lauderdale, Morgan and Storr asked how to determine the maximum genus of such a graph, or more precisely the maximum genus of a surface on which such a graph has a cellular embedding \cite[Open Question 3]{CDLMS}. Barbieri recently showed that the genus of a connected graph with given automorphism group is unbounded, and so this maximum does not exist for any group \cite{Barb}. Barbieri's construction has high genus by virtue of containing a high-dimensional hypercube. However, we note that this structure does not guarantee that other parameters of potential interest, such as the chromatic number, are also large. We give a simple proof that graphs realising any given automorphism group have unbounded clique number, and hence unbounded genus and chromatic number.
\begin{proposition}\label{prop}Let $ G $ be a non-complete finite graph, and let $ G '$ be the graph obtained from $ G $ by adding a disjoint clique with the same number of vertices and a perfect matching between the two. Then $\aut( G ')\cong \aut( G )$.
\end{proposition}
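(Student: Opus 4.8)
The plan is to show that every automorphism of $G'$ must preserve the partition of $V(G')$ into the original vertex set $V$ of $G$ and the added clique $K$, and moreover respect the perfect matching between them; the isomorphism $\aut(G')\cong\aut(G)$ then follows by restriction. Write $n=|V|$, label $V=\{v_1,\dots,v_n\}$ and $K=\{k_1,\dots,k_n\}$, and let $v_i$ be matched to $k_i$.

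The key step, and the one I expect to carry the most weight, is to recognise $K$ inside $G'$ in a purely graph-theoretic way. I would do this by analysing the cliques of $G'$. A clique meeting both $V$ and $K$ can have size at most $2$: the only neighbour of $v_i$ lying in $K$ is its partner $k_i$, so a clique containing two vertices of $V$ can contain no vertex of $K$, while one containing a single $v_i$ can only be enlarged by $k_i$. A clique contained entirely in $V$ is a clique of $G$, hence has order at most $\omega(G)\le n-1$ since $G$ is non-complete. As $K$ itself is a clique of order $n$, it follows that for $n\ge 3$ it is the \emph{unique} clique of order $n$, and is therefore fixed setwise by every automorphism; consequently $V$ is fixed setwise too. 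The hypothesis that $G$ is non-complete forces $n\ge 2$, and $n=2$ (where $G'$ is a path on four vertices) is the one genuine exception, being the only case in which the maximum clique is not unique; I would dispose of it directly by inspection.

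With $V$ and $K$ each preserved, I would next argue that the matching is preserved. The edges of $G'$ running between $V$ and $K$ are exactly the matching edges, so any automorphism $\phi$ permutes them; hence $\phi(k_i)$ is forced to be the partner of $\phi(v_i)$, and $\phi$ acts by a single index permutation $\sigma$, with $\phi(v_i)=v_{\sigma(i)}$ and $\phi(k_i)=k_{\sigma(i)}$.

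Finally I would set up the restriction map $\Phi\colon\aut(G')\to\aut(G)$ sending $\phi$ to $\phi|_V$; this is a well-defined homomorphism because the edges of $G'$ inside $V$ are precisely the edges of $G$. Injectivity is immediate, since $\phi|_V=\mathrm{id}$ forces $\sigma=\mathrm{id}$ and hence $\phi=\mathrm{id}$ on $K$ as well. For surjectivity I would take any $\alpha\in\aut(G)$, read off its index permutation, and extend it to $K$ by the same permutation; verifying that this extension preserves adjacency within $V$, within $K$, along the matching, and between non-matched pairs is routine. This establishes the desired isomorphism.
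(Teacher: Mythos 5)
Your proposal is correct and follows essentially the same route as the paper: identify the added clique as the unique $K_n$ in $G'$ (for $n\ge 3$, with the small case checked directly), deduce that every automorphism preserves the bipartition and respects the matching, and conclude via the resulting correspondence between $\aut(G')$ and $\aut(G)$. The only cosmetic differences are that you phrase the isomorphism as a restriction map rather than the paper's extension map $\psi\mapsto\psi'$, and that you spell out the clique-uniqueness argument which the paper merely asserts.
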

\begin{proof}
	Let $n=|V( G )|$. The statement holds for the unique non-complete graph on at most $2$ vertices, so we may assume $n\geq 3$. We partition $V( G ')$ as $V_1\cup V_2$, where $|V_1|=|V_2|=n$ and $ G '[V_1]\cong  G $ and $ G '[V_2]\cong K_n$. Furthermore, we may write $V_2=\{v'\mid v\in V_1\}$ such that $vv'\in E( G ')$ for each $v\in V_1$. Note that $ G '[V_2]$ is the unique copy of $K_n$ in $ G '$, and so any automorphism $\phi$ of $ G '$ must have $\phi(V_2)=V_2$ and therefore $\phi(V_1)=V_1$. Consequently, $\phi$ restricted to $V_1$ is an automorphism of $ G $. Conversely, for any $\psi\in\aut( G )$, there is a unique $\psi'\in\aut( G ')$ whose restriction to $V_1$ is $\psi$, obtained by setting $\psi'(v')=\psi(v)'$ for each $v\in V_1$. For any $\psi_1,\psi_2\in\aut( G )$ we have $(\psi_1\psi_2)'=\psi_1'\psi_2'$, since these agree on $V_1$, and hence $\aut( G ')\cong\aut( G ')$.
\end{proof}
\begin{corollary}For any graph $ G $ there exist connected graphs with automorphism group $\aut( G )$ and arbitrarily large clique number.
\end{corollary}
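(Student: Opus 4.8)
The plan is to iterate the construction of Proposition~\ref{prop}. Starting from a suitable non-complete graph with automorphism group $\aut(G)$, each application of the proposition preserves the automorphism group while introducing a clique on all of the current vertices; repeating this produces a family of connected graphs whose clique numbers grow without bound.

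First I would secure a non-complete starting graph $H_0$ with $\aut(H_0)\cong\aut(G)$. If $G$ is itself non-complete, take $H_0=G$. If $G$ is complete on at least two vertices, take $H_0=\overline{G}$: complementation preserves the automorphism group, and the complement of a complete graph on $n\geq 2$ vertices is the (non-complete) empty graph. This leaves only the case $|V(G)|\leq 1$, where $\aut(G)$ is trivial and I would take for $H_0$ any rigid (asymmetric) graph, which is automatically non-complete. In every case $|V(H_0)|\geq 2$.

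Next I would define $H_{k}=H_{k-1}'$ for $k\geq 1$, where the prime denotes the operation of Proposition~\ref{prop}. Two routine observations keep the iteration going. First, $H_k$ is again non-complete: writing the vertex set of $H_{k-1}$ as $V_1$ and its matched clique as $V_2$, a vertex of $V_1$ is non-adjacent to every vertex of $V_2$ except its own match, so (as $|V_1|\geq 2$) non-edges persist. Hence the proposition applies at every stage and gives $\aut(H_k)\cong\aut(H_{k-1})\cong\cdots\cong\aut(G)$. Second, $H_k$ is connected for every $k\geq 1$: its clique $V_2$ is connected, and the perfect matching joins every vertex of $V_1$ to $V_2$, so connectivity holds regardless of whether $H_{k-1}$ was connected.

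Finally I would track the clique number. By construction $H_k$ contains a clique on all of the $|V(H_{k-1})|$ vertices of the previous stage, and since each step doubles the vertex count we have $|V(H_k)|=2^k|V(H_0)|$ and hence $\omega(H_k)\geq 2^{k-1}|V(H_0)|$, which tends to infinity. The family $\{H_k\}_{k\geq 1}$ therefore consists of connected graphs with automorphism group $\aut(G)$ and arbitrarily large clique number. I expect the only point needing genuine care to be the reduction to a non-complete starting graph — in particular the complete and one-vertex cases, where Proposition~\ref{prop} cannot be applied to $G$ directly — while the growth of the clique number and the preservation of the automorphism group follow immediately from iterating the proposition.
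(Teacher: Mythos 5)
Your proof is correct and takes essentially the same route as the paper: iterate Proposition~\ref{prop}, after first replacing a complete $G$ by its complement $\overline{K_n}$ when $n\geq 2$, or by an asymmetric graph when the group is trivial (the paper uses the specific six-vertex path with a chord, and it likewise checks connectivity and clique growth implicitly via the matching-plus-clique structure). One small slip to fix: a rigid graph is \emph{not} automatically non-complete ($K_1$ is rigid and complete), so in the trivial-group case you should ask for a rigid graph on at least two vertices --- such graphs exist, the smallest having six vertices --- after which your argument goes through unchanged.
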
	
\begin{proof}If $ G $ is not complete, this follows immediately by iterating the construction of Proposition \ref{prop}. If $ G $ is complete then $\aut( G )\cong S_n$ for some $n$, and we may apply the same argument to some non-complete graph with the same automorphism group: $\overline K_n$ if $n\geq 2$ or the graph obtained from the $6$-vertex path by adding an edge between the second and fourth vertices if $n=1$.
\end{proof}
Since the possible values of $\aut(G)$ range over all finite groups \cite{Frucht}, it follows that any finite group $\Gamma$ is the automorphism group of graphs of unbounded clique number.

\end{document}